\providecommand{\U}[1]{\protect\rule{.1in}{.1in}}
\newtheorem{theorem}{Theorem}
\newtheorem{corollary}[theorem]{Corollary}
\newtheorem{proposition}[theorem]{Proposition}
\begin{document}
\title{Zero set of Zak transform and the HRT Conjecture}
\date{\today}
%author{Kasso A.~Okoudjou}
%\address{Department of Mathematics, Tufts University, Medford MA 02131, USA}
%\email{kasso.okoudjou@tufts.edu}
\author{Vignon Oussa}
\address{Department of Mathematics, Bridgewater State University, Bridgewater, MA 0235, USA}
\email{vignon.oussa@bridgew.edu}

\begin{abstract}
The HRT (Heil-Ramanathan-Topiwala) posits the linear independence of any set of nonzero square-integrable vectors obtained from a single nonzero vector $f$ by applying a finite set of time-frequency shift operators. In this short note, we present findings centered on the zero set of the Zak transform of $f$, and a distinct arrangement involving a finite set of $N$ points on an integer lattice in the time-frequency plane, excluding a specific point.
\end{abstract}
\subjclass[2000]{Primary 42C15 Secondary 42C40}
\keywords{HRT conjecture, Ergodic maps, Zak transform}
\maketitle

%\tableofcontents

\section{Introduction}

The HRT Conjecture \cite{Heil} posits that a finite system of time-frequency shifted copies of a nonzero square-integrable function will always be linearly independent. For an updated overview of this conjecture, please refer to our latest work \cite{KO}. 

Following up on \cite{KO}, the primary objective of this short note is to present a number of new results for this conjecture. Let's consider a function $f$ that is nonzero in $L^{2}\left(\mathbb{R}^{n}\right)$. Choose an arbitrary natural number $N$. Following this, we construct a finite system of vectors, which we will denote as 
$\mathcal{F}_{\left(  N,1\right)  }$ in the following manner
\[
\left\{  t\mapsto e^{-2\pi i\left\langle
y^{\left(  j\right)  },t\right\rangle }f\left(  t-x^{\left(  j\right)
}\right)  :\left(  x^{\left(  1\right)  },y^{\left(  1\right)  }\right)
,\cdots,\left(  x^{\left(  N\right)  },y^{\left(  N\right)  }\right)  \in%
%TCIMACRO{\U{2124} }%
%BeginExpansion
\mathbb{Z}
%EndExpansion
^{2n}\right\}\newline  \cup\left\{  t\mapsto e^{-2\pi i\left\langle y,t\right\rangle
}f\left(  t-x\right)  \right\}.
\]
This system includes functions of the form
\[
t\mapsto e^{-2\pi i\left\langle y^{\left(  j\right)  },t\right\rangle
}f\left(  t-x^{\left(  j\right)  }\right)  \in L^{2}\left(
%TCIMACRO{\U{211d} }%
%BeginExpansion
\mathbb{R}
%EndExpansion
^{n}\right)
\]
where each $\left(  x^{\left(  1\right)  },y^{\left(  1\right)  }\right)
,\cdots,\left(  x^{\left(  N\right)  },y^{\left(  N\right)  }\right)  $ belongs
to the integer full-rank lattice $\mathbb{Z}^{2n}.$ Furthermore, it includes a time-frequency-shifted vector of the form $t\mapsto e^{-2\pi
i\left\langle y,t\right\rangle }f\left(  t-x\right)  \in L^{2}\left(
%TCIMACRO{\U{211d} }%
%BeginExpansion
\mathbb{R}
%EndExpansion
^{n}\right)  $ where $\left(  x,y\right)  $ can be identified with a point on
the time-frequency plane.  We establish the following as valid:

\begin{proposition}
\label{main copy(1)} Suppose that the Zak transform of $f$ is continuous. Then the system of vectors $\mathcal{F}_{\left(  N,1\right)  }$
is linearly independent under the condition that the group which is
generated by $\left(  -x,y\right)  $ modulo $%
%TCIMACRO{\U{2124} }%
%BeginExpansion
\mathbb{Z}
%EndExpansion
^{2n}$ is a dense subset of the $2n$-dimensional torus $\left[  0,1\right)
^{n}\times\left[  0,1\right)  ^{n}$.
\end{proposition}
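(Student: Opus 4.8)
The plan is to transport the dependence relation to the Zak-transform side, where integer time--frequency shifts become multiplication by characters while the single exceptional shift becomes a rigid translation of the base torus. Write $g_j(t)=e^{-2\pi i\langle y^{(j)},t\rangle}f(t-x^{(j)})$ for $j=1,\dots,N$ and $g_0(t)=e^{-2\pi i\langle y,t\rangle}f(t-x)$, and suppose toward a contradiction that $\sum_{j=1}^N c_j g_j+c_0 g_0=0$. Using the Zak transform $Zf(t,\omega)=\sum_{k\in\mathbb{Z}^n}f(t-k)e^{2\pi i\langle k,\omega\rangle}$ together with its behaviour under translations and modulations, I would record that for the integer nodes $Zg_j=\chi_j\cdot Zf$ with $\chi_j(t,\omega)=e^{-2\pi i(\langle y^{(j)},t\rangle+\langle x^{(j)},\omega\rangle)}$ a character of the torus, whereas for the exceptional node $Zg_0(t,\omega)=e^{-2\pi i\langle y,t\rangle}\,Zf(t-x,\omega+y)$. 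Setting $P=\sum_{j=1}^N c_j\chi_j$ and $S(t,\omega)=(t-x,\omega+y)$, the relation becomes the functional equation $P\cdot Zf+c_0\,e^{-2\pi i\langle y,t\rangle}\,(Zf\circ S)=0$, which, since every term is continuous, holds at every point of $\mathbb{R}^{2n}$.

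If $c_0=0$, the equation reads $P\cdot Zf\equiv 0$. Because $f\neq0$, the Zak transform is not identically zero, so by continuity the set $\{Zf\neq0\}$ is nonempty and open; on it $P$ must vanish, and a trigonometric polynomial vanishing on a nonempty open set is identically zero. Since the $(x^{(j)},y^{(j)})$ are distinct lattice points, the characters $\chi_j$ are linearly independent, forcing $c_1=\dots=c_N=0$. Thus everything reduces to excluding $c_0\neq0$.

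So assume $c_0\neq0$ and consider the zero set $\mathcal{N}=\{(t,\omega):Zf(t,\omega)=0\}$. It is closed by continuity, and since $|Zf|$ is $\mathbb{Z}^{2n}$-periodic by the quasi-periodicity of the Zak transform, $\mathcal{N}$ descends to a closed subset $\overline{\mathcal{N}}$ of $\mathbb{T}^{2n}=\mathbb{R}^{2n}/\mathbb{Z}^{2n}$. Evaluating the functional equation at a point of $\mathcal{N}$ annihilates the first term, and as $c_0\neq0$ it yields $Zf(S(t,\omega))=0$; hence $\mathcal{N}$ is forward-invariant under $S$, which on $\mathbb{T}^{2n}$ is translation by $(-x,y)\bmod\mathbb{Z}^{2n}$. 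The hypothesis that the group $\langle(-x,y)\rangle$ is dense makes this translation minimal: the forward orbit of any point is dense, because the closure of $\{m(-x,y):m\geq1\}$ is a closed sub-semigroup of the compact group $\mathbb{T}^{2n}$, hence a subgroup, hence all of $\mathbb{T}^{2n}$. Consequently a nonempty closed forward-invariant set must be everything, giving the dichotomy $\overline{\mathcal{N}}=\emptyset$ or $\overline{\mathcal{N}}=\mathbb{T}^{2n}$.

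Both alternatives are impossible. If $\overline{\mathcal{N}}=\mathbb{T}^{2n}$ then $Zf\equiv0$, so $f=0$, contrary to assumption. If $\overline{\mathcal{N}}=\emptyset$ then $Zf$ is a continuous, nowhere-vanishing, quasi-periodic function, contradicting the classical fact that the Zak transform, being a section of the nontrivial line bundle over $\mathbb{T}^{2n}$ determined by its automorphy factor, must vanish somewhere. Securing that $\mathcal{N}\neq\emptyset$ and then propagating it to the entire torus by minimality is, I expect, the main obstacle; the continuity hypothesis is used precisely here, simultaneously to make the zero set closed and to invoke the vanishing theorem. With $c_0\neq0$ ruled out we return to the case $c_0=0$, and the linear independence of $\mathcal{F}_{(N,1)}$ follows.
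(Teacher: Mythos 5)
Your proof is correct and follows essentially the same route as the paper: transfer the dependence relation to the Zak domain, obtain the functional equation relating $Zf$ to its translate by $\gamma=(-x,y)$, and combine the resulting invariance of the zero set with the classical fact (Gr\"ochenig, Lemma 8.4.2) that a continuous quasi-periodic function must vanish somewhere, so that density of the orbit forces $Zf\equiv 0$, contradicting $f\neq 0$. Your write-up is in fact slightly more complete than the paper's, since you also dispose of the case where the exceptional shift carries coefficient zero and you justify that the forward semigroup orbit (not merely the full group generated by $\gamma$) is dense.
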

The subsequent finding indicates that for functions that decay at a sufficiently rapid rate, the zero set of their Zak transform remains unchanged under a specific symmetry. 
\begin{proposition}
\label{main copy(3)} If $\mathcal{F}_{\left(  N,1\right)  }$ is linearly
dependent then the zero set of the Zak transform of $f$  is invariant under the
action
\[
\left[  0,1\right)  ^{n}\times\left[  0,1\right)  ^{n}\backepsilon z\mapsto
z+\left(  -x,y\right)  \text{ modulo }%
%TCIMACRO{\U{2124} }%
%BeginExpansion
\mathbb{Z}
%EndExpansion
^{2n}\in\left[  0,1\right)  ^{n}\times\left[  0,1\right)  ^{n}.
\]
\ 
\end{proposition}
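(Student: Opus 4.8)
The plan is to transport the assumed linear dependence to the Zak side, where integer time–frequency shifts act by multiplication by unimodular characters while the single non-integer shift acts by a translation of the torus, and then read off the invariance of the zero set from the resulting functional equation. I would first recall the standard Zak transform identities (with the normalization matching the statement): $Z$ is a unitary map of $L^2(\R^n)$ onto $L^2$ of the fundamental domain $[0,1)^n\times[0,1)^n$; it is quasi-periodic, $Zf(\xi+m,\eta)=e^{2\pi i\langle m,\eta\rangle}Zf(\xi,\eta)$ and $Zf(\xi,\eta+m)=Zf(\xi,\eta)$ for $m\in\Z^n$; and, writing $\pi(x,y)f(t)=e^{-2\pi i\langle y,t\rangle}f(t-x)$, a direct computation gives the intertwining identity
\[
Z\bigl(\pi(x,y)f\bigr)(\xi,\eta)=e^{-2\pi i\langle y,\xi\rangle}\,Zf(\xi-x,\eta+y).
\]
For $(x^{(j)},y^{(j)})\in\Z^{2n}$, quasi-periodicity collapses this to multiplication of $Zf$ by the character $\chi_j(\xi,\eta)=e^{-2\pi i(\langle y^{(j)},\xi\rangle+\langle x^{(j)},\eta\rangle)}$.

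Next I would write the hypothesized dependence as $\sum_{j=1}^N c_j\,\pi(x^{(j)},y^{(j)})f+c\,\pi(x,y)f=0$ with $(c_1,\dots,c_N,c)\neq0$ and apply the (unitary, hence injective) transform $Z$. Using the previous paragraph this becomes the pointwise-a.e. functional equation
\[
P(\xi,\eta)\,Zf(\xi,\eta)=-c\,e^{-2\pi i\langle y,\xi\rangle}\,Zf(\xi-x,\eta+y),\qquad P:=\sum_{j=1}^N c_j\,\chi_j,
\]
where $P$ is a trigonometric polynomial. I would then rule out $c=0$: if $c=0$ then $P\cdot Zf=0$ a.e. with $P\not\equiv0$, and since a nonzero trigonometric polynomial (a nonzero real-analytic function on the connected torus) vanishes only on a Lebesgue-null set, this forces $Zf=0$ a.e., contradicting $f\neq0$; this is exactly the lattice case of HRT. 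Hence $c\neq0$, and writing $T$ for the torus translation $z\mapsto z+(-x,y)\bmod\Z^{2n}$ the equation reads $Zf\circ T=Q\cdot Zf$, where $Q:=-\tfrac1c e^{-2\pi i\langle y,\xi\rangle}P$ is continuous and $\{Q=0\}=\{P=0\}$ is null.

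The forward inclusion is then immediate: on $Z_0:=\{Zf=0\}$ the left-hand side of the functional equation vanishes, so $Zf(\xi-x,\eta+y)=0$ there, that is $T(Z_0)\subseteq Z_0$. Under the standing decay hypothesis $Zf$ is continuous, so both sides of the functional equation are continuous functions agreeing a.e., hence everywhere; thus $Z_0$ is a genuine closed set and the inclusion $T(Z_0)\subseteq Z_0$ holds as an exact, pointwise statement.

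Finally I would upgrade this to the full invariance. From $Zf\circ T=Q\cdot Zf$ one reads off $T^{-1}(Z_0)=\{Q=0\}\cup Z_0=\{P=0\}\cup Z_0$; since $T$ preserves Lebesgue measure on the torus and $\{P=0\}$ is null, this yields $\mu\bigl(T^{-1}(Z_0)\,\triangle\,Z_0\bigr)=0$, i.e. $Z_0$ is invariant under $T$ modulo null sets — precisely the form of invariance that feeds into an ergodicity argument and that links this proposition to Proposition~\ref{main copy(1)}. The one delicate point, and the main obstacle, is exactly the exceptional set $\{P=0\}$: backward invariance can fail on it pointwise, so the entire argument hinges on the fact that a nonzero trigonometric polynomial has a zero set of measure zero, which is what renders this set negligible and closes the gap between the exact forward inclusion and two-sided invariance.
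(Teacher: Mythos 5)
Your proof is correct and takes essentially the same route as the paper: apply the Zak transform to the dependence relation, obtain the functional equation $P(z)\,Zf(z) = (\text{unimodular factor})\cdot Zf(z+\gamma)$ with $\gamma = (-x,y)$, and read off that $Zf(z)=0$ forces $Zf(z+\gamma)=0$. The additional care you take --- ruling out a vanishing coefficient on the non-lattice term via the null zero set of a nonzero trigonometric polynomial (the paper simply writes the dependence with that coefficient normalized to $1$), and tracking the a.e.\ versus pointwise distinction and the two-sided invariance modulo null sets --- strengthens but does not alter the paper's argument.
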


\begin{proposition}
\label{main} Suppose that the Zak transform of $f$ is continuous and has a \textcolor{blue}{finite number of zeros} in the unit square $\left[ 0,1\right) ^{n}\times\left[ 0,1\right) ^{n}$. The system of vectors $\mathcal{F}_{\left(  N,1\right)  }$
is linearly independent under the condition that the group which is
generated by $\left(  -x,y\right)  $ modulo $%
%TCIMACRO{\U{2124} }%
%BeginExpansion
\mathbb{Z}
%EndExpansion
^{2n}$ forms a countably infinite subset (not necessarily dense) of the $2n$-dimensional torus
$\left[  0,1\right)  ^{n}\times\left[  0,1\right)  ^{n}$.
\end{proposition}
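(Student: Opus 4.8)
The plan is to argue by contradiction and reduce everything to Proposition \ref{main copy(3)}, which already converts linear dependence into a rigid geometric constraint on the zero set of $Zf$. Assume, then, that $\mathcal{F}_{(N,1)}$ is linearly dependent. Write $Z := \{z \in [0,1)^n \times [0,1)^n : Zf(z) = 0\}$ for the zero set of the Zak transform on the fundamental domain, and let $\tau$ denote the torus rotation $\tau(z) = z + (-x, y) \bmod \mathbb{Z}^{2n}$. Proposition \ref{main copy(3)} then tells us that $Z$ is invariant under $\tau$. Since $\tau$ is a bijection of the torus permuting a set, invariance propagates to every iterate, $\tau^{k}(Z) = Z$ for all $k \in \mathbb{Z}$, so $Z$ contains the full $\tau$-orbit of each of its points.

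Next I would record that $Z$ is nonempty, which is the one external ingredient needed. A continuous Zak transform cannot be zero-free: restricting $Zf$ to a two-dimensional slice on which a single time variable $t_1$ and its conjugate frequency variable $w_1$ vary (all remaining coordinates held fixed) produces a continuous function $g$ on the $2$-torus that inherits from $Zf$ a quasi-periodicity relation $g(t_1 + 1, w_1) = \lambda(w_1)\, g(t_1, w_1)$ with $\lambda(w_1) = e^{2\pi i w_1}$, together with periodicity in $w_1$. Because $\lambda$ has winding number one, the classical winding-number (degree) obstruction forbids $g$ from being nowhere vanishing, so $Z \neq \emptyset$. Combined with the standing hypothesis, $Z$ is therefore a \emph{nonempty finite} set.

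The contradiction is then immediate. Fix any $z_0 \in Z$. Its $\tau$-orbit is the coset $z_0 + G$, where $G = \langle (-x, y) \rangle$ is the subgroup of the torus generated by $(-x, y) \bmod \mathbb{Z}^{2n}$, and this coset has the same cardinality as $G$. By hypothesis $G$ is countably infinite, so $z_0 + G$ is an infinite subset of the torus. By the first two paragraphs $z_0 + G \subseteq Z$, yet $Z$ is finite, which is absurd. Hence no nontrivial dependence can occur, and $\mathcal{F}_{(N,1)}$ is linearly independent.

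I expect the only genuinely delicate point to be the nonvanishing claim $Z \neq \emptyset$: without it an empty zero set would be vacuously $\tau$-invariant and finite, leaving the argument with nothing to contradict. Everything else is bookkeeping — the passage from $\tau$-invariance to invariance under the whole cyclic group $G$ is automatic because $\tau$ permutes the finite set $Z$. The conceptual gap between this statement and Proposition \ref{main copy(1)} is precisely that here we trade the density of $G$ (which in the earlier result is what forbids \emph{any} nontrivial invariant zero set) for the finiteness of $Z$ together with the much weaker requirement that $G$ merely be infinite.
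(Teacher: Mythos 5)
Your proof is correct and follows essentially the same route as the paper: the paper likewise fixes a zero $\lambda$ of $Zf$ (guaranteed by continuity, citing Lemma 8.4.2 of Gr\"ochenig), shows via the iterated identity $\left\vert F\left(\lambda+m\gamma\right)\right\vert=\prod_{j=0}^{m-1}\left\vert P\left(\lambda+j\gamma\right)\right\vert\cdot\left\vert F\left(\lambda\right)\right\vert$ that the forward orbit of $\lambda$ under $\gamma=\left(-x,y\right)$ lies in the zero set, and concludes that an infinite $\Gamma$ forces an infinite zero set, contradicting finiteness. The only cosmetic differences are that you invoke Proposition \ref{main copy(3)} for the invariance instead of re-deriving it, and you sketch the winding-number argument for $\mathrm{Zero}\left(Zf\right)\neq\emptyset$ where the paper simply cites Gr\"ochenig.
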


As a straightforward application of Proposition \ref{main}, we settle the HRT Conjecture with respect to the configuration at hand for a certain class of totally positive functions. 

\begin{corollary}
\label{main copy(2)} Let $f$ be a totally positive nonzero function without
Gaussian factor in its Fourier transform.  Then the system of vectors
\[
\mathcal{F}=\left\{  t\mapsto e^{-2\pi iy^{\left(  j\right)  }t}f\left(
t-x^{\left(  j\right)  }\right)  :x^{\left(  1\right)  },y^{\left(  1\right)
},\cdots,x^{\left(  N\right)  },y^{\left(  N\right)  }\in%
%TCIMACRO{\U{2124} }%
%BeginExpansion
\mathbb{Z}
%EndExpansion
\right\}  \cup\left\{  t\mapsto e^{-2\pi iyt}f\left(  t-x\right)  \right\}
\]
is linearly independent.
\end{corollary}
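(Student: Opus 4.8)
The plan is to verify the hypotheses of Proposition \ref{main} for this class of functions and then to dispose of the single arithmetic case it does not cover by invoking the lattice case of the HRT conjecture. The decisive structural input comes from Schoenberg's representation: a totally positive function $f$ of finite type whose Fourier transform has no Gaussian factor satisfies
$$\widehat{f}(\xi) = C\, e^{2\pi i \beta \xi} \prod_{\nu=1}^{M} \bigl(1 + 2\pi i \delta_\nu \xi\bigr)^{-1}$$
for some $C>0$, $\beta\in\R$ and nonzero real numbers $\delta_{1},\dots,\delta_{M}$. First I would record two facts about such $f$: its Zak transform $Zf$ is continuous (the rapid decay of both $f$ and $\widehat f$ makes the defining quasi-periodic series converge locally uniformly), and $Zf$ has only finitely many zeros in the fundamental square $[0,1)\times[0,1)$ --- indeed a single zero. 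These are exactly the two standing hypotheses required to invoke Proposition \ref{main}.

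With the hypotheses secured I would split on the arithmetic type of the exceptional node $(x,y)$. If at least one of $x,y$ is irrational, then $(-x,y)$ is not a torsion element of the torus, so the group it generates modulo $\Z^{2}$ is countably infinite; since $Zf$ is continuous with finitely many zeros, Proposition \ref{main} applies directly (whether or not that group is dense) and yields the linear independence of $\mathcal{F}$. If instead $x,y\in\Q$, then the group generated by $(-x,y)$ modulo $\Z^{2}$ is finite and Proposition \ref{main} no longer applies; here I observe that $\Lambda:=\Z^{2}+\Z(x,y)$ is a full-rank lattice in $\R^{2}$ containing every one of the $N+1$ time-frequency nodes appearing in $\mathcal{F}$, so linear independence follows from Linnell's theorem, which establishes the HRT conjecture for arbitrary finite subsets of a full-rank lattice. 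Since a point of the torus is torsion precisely when both coordinates are rational, these two cases are exhaustive.

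The only real work, and the sole place where the hypothesis ``totally positive without Gaussian factor'' is used, is the claim that $Zf$ is continuous and has a finite zero set. Continuity is routine from the decay estimates above, but the finiteness of the zero set is the crux: one must rule out a continuum of zeros, and this rests on the fine analytic theory of totally positive functions --- exploiting the product form of $\widehat{f}$ and the sign-regularity that controls how $Zf$ can oscillate along the two quasi-periodicity directions. I expect this zero-set analysis to be the main obstacle; once it is in place, the remainder is the routine bookkeeping of the two cases above.
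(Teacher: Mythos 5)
Your proposal is correct and takes essentially the same route as the paper: verify the hypotheses of Proposition \ref{main} for totally positive functions without Gaussian factor, then split on whether $(x,y)$ is rational, handling the rational case by the known lattice case of HRT and the irrational case by the countably infinite group generated by $(-x,y)$ modulo $\mathbb{Z}^{2}$. The one point you flag as ``the main obstacle'' --- that $Zf$ is continuous and has finitely many (in fact one) zeros in the fundamental domain --- is precisely the theorem of Kloos (2015) that the paper cites rather than proves, so your outline is complete once that reference is invoked.
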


\begin{proof}
Let's consider a function, $f,$ which is totally positive and does not have a
Gaussian factor in its Fourier transform. According to \cite{Kloos}, the Zak
transform of $f$ is continuous and admits only one zero on its fundamental domain
of quasi-periodicity. If $\dim_{%
%TCIMACRO{\U{211a} }%
%BeginExpansion
\mathbb{Q}
%EndExpansion
}\left(
%TCIMACRO{\U{211a} }%
%BeginExpansion
\mathbb{Q}
%EndExpansion
+%
%TCIMACRO{\U{211a} }%
%BeginExpansion
\mathbb{Q}
%EndExpansion
x+%
%TCIMACRO{\U{211a} }%
%BeginExpansion
\mathbb{Q}
%EndExpansion
y\right)  =1$ then we are in the case of rational time-frequency shifts, and in this case, the HRT Conjecture is known to always be true. Suppose next that $\dim_{%
%TCIMACRO{\U{211a} }%
%BeginExpansion
\mathbb{Q}
%EndExpansion
}\left(
%TCIMACRO{\U{211a} }%
%BeginExpansion
\mathbb{Q}
%EndExpansion
+%
%TCIMACRO{\U{211a} }%
%BeginExpansion
\mathbb{Q}
%EndExpansion
x+%
%TCIMACRO{\U{211a} }%
%BeginExpansion
\mathbb{Q}
%EndExpansion
y\right)  >1.$ Then the group generated by $\left(  -x,y\right)  $
modulo $%
%TCIMACRO{\U{2124} }%
%BeginExpansion
\mathbb{Z}
%EndExpansion
^{2}$ is infinite and the stated corollary is immediate. 
\end{proof}

\section{Proof of Propositions \ref{main copy(1)}, \ref{main copy(3)}, and \ref{main} }

Let $\mathfrak{h}_{n}$ be the ($2n+1$)-dimensional Heisenberg algebra. For a
concrete realization \cite{Corwin}, one may choose an algebra constructed on the basis set
$\left\{  X_{1},\cdots,X_{n},Y_{1},\cdots,Y_{n},Z\right\}  $. This algebra can
be expressed as follows: $\sum_{k=1}^{n}x_{k}X_{k}+\sum_{k=1}^{n}y_{k}%
Y_{k}+zZ$ where%
\[
\sum_{k=1}^{n}x_{k}X_{k}+\sum_{k=1}^{n}y_{k}Y_{k}+zZ=\left[
\begin{array}
[c]{ccccc}%
0 & x_{1} & \cdots & x_{n} & z\\
&  &  &  & y_{1}\\
&  & \ddots &  & \vdots\\
&  &  &  & y_{n}\\
0 &  &  &  & 0
\end{array}
\right]  .
\]
Its associated Lie group denoted as $H_{n}$ is given by:
\[
H_{n}=\left\{  \exp\left(  zZ\right)  \exp\left(  \sum_{k=1}^{n}y_{k}%
Y_{k}\right)  \exp\left(  \sum_{k=1}^{n}x_{k}X_{k}\right)  =\left[
\begin{array}
[c]{ccccc}%
1 & x_{1} & \cdots & x_{n} & z\\
&  &  &  & y_{1}\\
&  & \ddots &  & \vdots\\
&  &  &  & y_{n}\\
0 &  &  &  & 1
\end{array}
\right]  :x_{j},y_{k},z\in%
%TCIMACRO{\U{211d} }%
%BeginExpansion
\mathbb{R}
%EndExpansion
\right\}  .
\]
It is a well-established fact that $H_{n}$ acts unitarily and irreducibly on
$L^{2}\left(
%TCIMACRO{\U{211d} }%
%BeginExpansion
\mathbb{R}
%EndExpansion
^{n}\right)  $ via $\pi$ as follows: Given points $y=\left(  y_{1}%
,\cdots,y_{n}\right)  ,x=\left(  x_{1},\cdots,x_{n}\right) \in\mathbb{R}^n$ and a vector
$f\in L^{2}\left(
%TCIMACRO{\U{211d} }%
%BeginExpansion
\mathbb{R}
%EndExpansion
^{n}\right)  ,$ one has: (a) multiplication by characters%
\[
\left[  \pi\exp\left(  zZ\right)  f\right]  \left(  \exp\left(  \sum_{k=1}%
^{n}t_{k}X_{k}\right)  \right)  =e^{2\pi iz}f\left(  \exp\left(  \sum
_{k=1}^{n}t_{k}X_{k}\right)  \right)
\]
(b) a modulation action of the form
\[
\left[  \pi\left(  \exp\left(  \sum_{k=1}^{n}y_{k}Y_{k}\right)  \right)
f\right]  \left(  \exp\left(  \sum_{k=1}^{n}t_{k}X_{k}\right)  \right)
=e^{-2\pi i\left\langle y,t\right\rangle }f\left(  \exp\left(  \sum_{k=1}%
^{n}t_{k}X_{k}\right)  \right)
\]
and (c) a translation action defined as
\[
\left[  \pi\left(  \exp\left(  \sum_{k=1}^{n}x_{k}X_{k}\right)  \right)
f\right]  \left(  t\right)  =f\left(  \exp\left(  \sum_{k=1}^{n}\left(
t_{k}-x_{k}\right)  X_{k}\right)  \right)  .
\]
Suppose that there exists a nonzero vector $f\in L^{2}\left(
%TCIMACRO{\U{211d} }%
%BeginExpansion
\mathbb{R}
%EndExpansion
^{n}\right)  $ such that
\[
\left\{  \pi\left(  \exp\left(  \sum_{k=1}^{n}\ell_{k}^{\left(  j\right)
}Y_{k}\right)  \exp\left(  \sum_{k=1}^{n}m_{k}^{\left(  j\right)  }%
X_{k}\right)  \right)  f:1\leq j\leq N\right\}  \cup\left\{ \pi\left( \exp\left(
\sum_{k=1}^{n}y_{k}Y_{k}\right)  \exp\left(  \sum_{k=1}^{n}x_{k}X_{k}\right)\right)
f\right\}
\]
is linearly dependent for some finite points lattice points of the form%
\begin{align*}
\sum_{k=1}^{n}\ell_{k}^{\left(  j\right)  }Y_{k}  & \in\sum_{k=1}^{n}%
%TCIMACRO{\U{2124} }%
%BeginExpansion
\mathbb{Z}
%EndExpansion
Y_{k},\\
\sum_{k=1}^{n}m_{k}^{\left(  j\right)  }X_{k}  & \in\sum_{k=1}^{n}%
%TCIMACRO{\U{2124} }%
%BeginExpansion
\mathbb{Z}
%EndExpansion
X_{k},1\leq j\leq N
\end{align*}
and
\begin{align*}
\sum_{k=1}^{n}y_{k}Y_{k}  & \in\sum_{k=1}^{n}%
%TCIMACRO{\U{211d} }%
%BeginExpansion
\mathbb{R}
%EndExpansion
Y_{k},\\
\sum_{k=1}^{n}x_{k}X_{k}  & \in\sum_{k=1}^{n}%
%TCIMACRO{\U{211d} }%
%BeginExpansion
\mathbb{R}
%EndExpansion
X_{k}.
\end{align*}
In other words, there exist nonzero complex numbers $c_{1},\cdots,c_{N}$ such
that
\[
\sum_{j=1}^{N}c_{j}\pi\left(  \exp\left(  \sum_{k=1}^{n}\ell_{k}^{\left(
j\right)  }Y_{k}\right)  \exp\left(  \sum_{k=1}^{n}m_{k}^{\left(  j\right)
}X_{k}\right)  \right)  f=\pi\left(  \exp\left(  \sum_{k=1}^{n}y_{k}%
Y_{k}\right)  \exp\left(  \sum_{k=1}^{n}x_{k}X_{k}\right)  \right)  f.
\]
Next, let $Z:L^{2}\left(
%TCIMACRO{\U{211d} }%
%BeginExpansion
\mathbb{R}
%EndExpansion
^{n}\right)  \rightarrow L^{2}\left(  \left[  0,1\right)  ^{n}\times\left[
0,1\right)  ^{n}\right)  $ be the Zak transform \cite{Grog}, formally
defined as follows:
\[
Zf\left(  t,\omega\right)  =\sum_{\tau_{1}\in%
%TCIMACRO{\U{2124} }%
%BeginExpansion
\mathbb{Z}
%EndExpansion
}\cdots\sum_{\tau_{n}\in%
%TCIMACRO{\U{2124} }%
%BeginExpansion
\mathbb{Z}
%EndExpansion
}f\left(  \exp\left(  \sum_{k=1}^{n}\left(  t_{k}+\tau_{k}\right)
X_{k}\right)  \right)  e^{-2\pi i\left\langle \omega,\tau\right\rangle }%
\]
where $\tau=\left(  \tau_{1},\cdots,\tau_{n}\right)  .$ Then it is known that
the Zak transform is a unitary operator whose range consists of quasi-periodic
functions (see \cite{Grog}, Section 8.1 on Page 147 and Theorem 8.2.5 on Page
154.) 

Furthermore, it is perhaps worth noting the following properties of the Zak transform:

\begin{itemize}
\item If $f$ is integrable then $Zf\in L^{1}\left(  \left[  0,1\right)
^{n}\times\left[  0,1\right)  ^{n}\right)  $

\item If $f$ is continuous and additionally,
\[
\sum_{\ell\in%
%TCIMACRO{\U{2124} }%
%BeginExpansion
\mathbb{Z}
%EndExpansion
^{n}}\text{esssup}_{t\in\left[  0,1\right)  ^{n}}\left\vert f\left(
t+\ell\right)  \right\vert <\infty
\]
then $Zf$ is continuous.

\item The Zak transform maps Schwartz functions to smooth function on $\left[
0,1\right)  ^{n}\times\left[  0,1\right)  ^{n}$. Also, if $F$ is quasiperiodic
on $%
%TCIMACRO{\U{211d} }%
%BeginExpansion
\mathbb{R}
%EndExpansion
^{n}\times%
%TCIMACRO{\U{211d} }%
%BeginExpansion
\mathbb{R}
%EndExpansion
^{n}$ then $F=Zf$ for some unique Schwartz function $f.$
\end{itemize}

Applying the Zak transform to each side of the following equation:
\[
\sum_{j=1}^{N}c_{j}\pi\left(  \exp\left(  \sum_{k=1}^{n}\ell_{k}^{\left(
j\right)  }Y_{k}\right)  \exp\left(  \sum_{k=1}^{n}m_{k}^{\left(  j\right)
}X_{k}\right)  \right)  f=\pi\left(  \exp\left(  \sum_{k=1}^{n}y_{k}%
Y_{k}\right)  \exp\left(  \sum_{k=1}^{n}x_{k}X_{k}\right)  \right)  f
\]
we obtain:
\begin{align*}
& \sum_{j=1}^{N}c_{j}\sum_{\tau_{1}\in%
%TCIMACRO{\U{2124} }%
%BeginExpansion
\mathbb{Z}
%EndExpansion
}\cdots\sum_{\tau_{n}\in%
%TCIMACRO{\U{2124} }%
%BeginExpansion
\mathbb{Z}
%EndExpansion
}e^{-2\pi i\left\langle t+\tau,\ell^{\left(  j\right)  }\right\rangle
}f\left(  \exp\left(  \sum_{k=1}^{n}\left(  t_{k}+\tau_{k}-m_{k}^{\left(
j\right)  }\right)  X_{k}\right)  \right)  e^{-2\pi i\left\langle \omega
,\tau\right\rangle }\\
& =\sum_{\tau_{1}\in%
%TCIMACRO{\U{2124} }%
%BeginExpansion
\mathbb{Z}
%EndExpansion
}\cdots\sum_{\tau_{n}\in%
%TCIMACRO{\U{2124} }%
%BeginExpansion
\mathbb{Z}
%EndExpansion
}e^{-2\pi i\left\langle t+\tau,y\right\rangle }f\left(  \exp\left(  \sum
_{k=1}^{n}\left(  t_{k}+\tau_{k}-x_{k}\right)  X_{k}\right)  \right)  e^{-2\pi
i\left\langle \omega,\tau\right\rangle }.
\end{align*}
On the one hand,
\begin{align*}
& \sum_{j=1}^{N}c_{j}\sum_{\tau_{1}\in%
%TCIMACRO{\U{2124} }%
%BeginExpansion
\mathbb{Z}
%EndExpansion
}\cdots\sum_{\tau_{n}\in%
%TCIMACRO{\U{2124} }%
%BeginExpansion
\mathbb{Z}
%EndExpansion
}e^{-2\pi i\left\langle t+\tau,\ell^{\left(  j\right)  }\right\rangle
}f\left(  \exp\left(  \sum_{k=1}^{n}\left(  t_{k}+\tau_{k}-m_{k}^{\left(
j\right)  }\right)  X_{k}\right)  \right)  e^{-2\pi i\left\langle \omega
,\tau\right\rangle }\\
& =\sum_{j=1}^{N}c_{j}e^{-2\pi i\left\langle t,\ell^{\left(  j\right)
}\right\rangle }\sum_{\tau_{1}\in%
%TCIMACRO{\U{2124} }%
%BeginExpansion
\mathbb{Z}
%EndExpansion
}\cdots\sum_{\tau_{n}\in%
%TCIMACRO{\U{2124} }%
%BeginExpansion
\mathbb{Z}
%EndExpansion
}e^{-2\pi i\left\langle \tau,\ell^{\left(  j\right)  }\right\rangle }f\left(
\exp\left(  \sum_{k=1}^{n}\left(  t_{k}+\tau_{k}-m_{k}^{\left(  j\right)
}\right)  X_{k}\right)  \right)  e^{-2\pi i\left\langle \omega,\tau
\right\rangle }\\
^{\left(  e^{-2\pi i\left\langle \tau,\ell^{\left(  j\right)  }\right\rangle
}=1\right)  }  & =\sum_{j=1}^{N}c_{j}e^{-2\pi i\left\langle t,\ell^{\left(
j\right)  }\right\rangle }\sum_{\tau_{1}\in%
%TCIMACRO{\U{2124} }%
%BeginExpansion
\mathbb{Z}
%EndExpansion
}\cdots\sum_{\tau_{n}\in%
%TCIMACRO{\U{2124} }%
%BeginExpansion
\mathbb{Z}
%EndExpansion
}f\left(  \exp\left(  \sum_{k=1}^{n}\left(  t_{k}+\tau_{k}-m_{k}^{\left(
j\right)  }\right)  X_{k}\right)  \right)  e^{-2\pi i\left\langle \omega
,\tau\right\rangle }.
\end{align*}
Moreover, the change of variable $\tau_{k}\mapsto\tau+m_{k}^{\left(  j\right)
}$ yields
\begin{align*}
& \sum_{j=1}^{N}c_{j}\sum_{\tau_{1}\in%
%TCIMACRO{\U{2124} }%
%BeginExpansion
\mathbb{Z}
%EndExpansion
}\cdots\sum_{\tau_{n}\in%
%TCIMACRO{\U{2124} }%
%BeginExpansion
\mathbb{Z}
%EndExpansion
}e^{-2\pi i\left\langle t+\tau,\ell^{\left(  j\right)  }\right\rangle
}f\left(  \exp\left(  \sum_{k=1}^{n}\left(  t_{k}+\tau_{k}-m_{k}^{\left(
j\right)  }\right)  X_{k}\right)  \right)  e^{-2\pi i\left\langle \omega
,\tau\right\rangle }\\
& =\sum_{j=1}^{N}c_{j}e^{-2\pi i\left\langle t,\ell^{\left(  j\right)
}\right\rangle }\sum_{\tau_{1}\in%
%TCIMACRO{\U{2124} }%
%BeginExpansion
\mathbb{Z}
%EndExpansion
}\cdots\sum_{\tau_{n}\in%
%TCIMACRO{\U{2124} }%
%BeginExpansion
\mathbb{Z}
%EndExpansion
}f\left(  \exp\left(  \sum_{k=1}^{n}\left(  t_{k}+\tau_{k}\right)
X_{k}\right)  \right)  e^{-2\pi i\left\langle \omega,\tau+m^{\left(  j\right)
}\right\rangle }\\
& =\sum_{j=1}^{N}c_{j}e^{-2\pi i\left\langle t,\ell^{\left(  j\right)
}\right\rangle }e^{-2\pi i\left\langle \omega,m^{\left(  j\right)
}\right\rangle }\sum_{\tau_{1}\in%
%TCIMACRO{\U{2124} }%
%BeginExpansion
\mathbb{Z}
%EndExpansion
}\cdots\sum_{\tau_{n}\in%
%TCIMACRO{\U{2124} }%
%BeginExpansion
\mathbb{Z}
%EndExpansion
}f\left(  \exp\left(  \sum_{k=1}^{n}\left(  t_{k}+\tau_{k}\right)
X_{k}\right)  \right)  e^{-2\pi i\left\langle \omega,\tau\right\rangle }\\
& =\left(  \sum_{j=1}^{N}c_{j}e^{-2\pi i\left\langle t,\ell^{\left(  j\right)
}\right\rangle }e^{-2\pi i\left\langle \omega,m^{\left(  j\right)
}\right\rangle }\right)  \cdot Zf\left(  t,\omega\right)  .
\end{align*}
Furthermore,
\begin{align*}
& \sum_{\tau_{1}\in%
%TCIMACRO{\U{2124} }%
%BeginExpansion
\mathbb{Z}
%EndExpansion
}\cdots\sum_{\tau_{n}\in%
%TCIMACRO{\U{2124} }%
%BeginExpansion
\mathbb{Z}
%EndExpansion
}e^{-2\pi i\left\langle t+\tau,y\right\rangle }f\left(  \exp\left(  \sum
_{k=1}^{n}\left(  t_{k}-x_{k}+\tau_{k}\right)  X_{k}\right)  \right)  e^{-2\pi
i\left\langle \omega,\tau\right\rangle }\\
& =e^{-2\pi i\left\langle t,y\right\rangle }\sum_{\tau_{1}\in%
%TCIMACRO{\U{2124} }%
%BeginExpansion
\mathbb{Z}
%EndExpansion
}\cdots\sum_{\tau_{n}\in%
%TCIMACRO{\U{2124} }%
%BeginExpansion
\mathbb{Z}
%EndExpansion
}e^{-2\pi i\left\langle \tau,y\right\rangle }f\left(  \exp\left(  \sum
_{k=1}^{n}\left(  t_{k}-x_{k}+\tau_{k}\right)  X_{k}\right)  \right)  e^{-2\pi
i\left\langle \omega,\tau\right\rangle }\\
& =e^{-2\pi i\left\langle t,y\right\rangle }\sum_{\tau_{1}\in%
%TCIMACRO{\U{2124} }%
%BeginExpansion
\mathbb{Z}
%EndExpansion
}\cdots\sum_{\tau_{n}\in%
%TCIMACRO{\U{2124} }%
%BeginExpansion
\mathbb{Z}
%EndExpansion
}f\left(  \exp\left(  \sum_{k=1}^{n}\left(  t_{k}-x_{k}+\tau_{k}\right)
X_{k}\right)  \right)  e^{-2\pi i\left\langle \omega+y,\tau\right\rangle }\\
& =e^{-2\pi i\left\langle t,y\right\rangle }Zf\left(  t-x,\omega+y\right)  .
\end{align*}
In summary, we have
\[
\left(  \sum_{j=1}^{N}c_{j}e^{-2\pi i\left\langle t,\ell^{\left(  j\right)
}\right\rangle }e^{-2\pi i\left\langle \omega,m^{\left(  j\right)
}\right\rangle }\right)  \cdot Zf\left(  t,\omega\right)  =e^{-2\pi
i\left\langle t,y\right\rangle }Zf\left(  t-x,\omega+y\right)  .
\]
For more compact notation, let $\left(  t,\omega\right)  =z\in\left[  0,1\right)  ^{n}\times\left[
0,1\right)  ^{n},F=Zf$, $\gamma=\left(  -x,y\right)  $ and let $P$ be the
trigonometric polynomial given by
\[
P\left(  t,\omega\right)  =\sum_{j=1}^{N}c_{j}e^{-2\pi i\left\langle
t,\ell^{\left(  j\right)  }\right\rangle }e^{-2\pi i\left\langle
\omega,m^{\left(  j\right)  }\right\rangle }.
\]
Given these definitions, we can see that the product of the absolute values of
$P$ and $F$ at the point $z$ equals the absolute value of $F$ at the point $z$
shifted by $\gamma$. In other words, for $z\in\left[  0,1\right)  ^{n}%
\times\left[  0,1\right)  ^{n},$
\[
\left\vert P\left(  z\right)  \right\vert \cdot\left\vert F\left(  z\right)
\right\vert =\left\vert F\left(  z+\gamma\right)  \right\vert .
\]
Furthermore, $\left\vert F\right\vert $ is a $%
%TCIMACRO{\U{2124} }%
%BeginExpansion
\mathbb{Z}
%EndExpansion
^{2n}$-periodic square-integrable function. 

As referenced in Lemma 8.4.2 of \cite{Grog}, if $F$ is a continuous function,
it is guaranteed to have at least one zero within the unit square defined by
$\left[  0,1\right)  ^{n}\times\left[  0,1\right)  ^{n}.$ By successively
applying the equation
\[
\left\vert P\left(  z\right)  \right\vert \cdot\left\vert F\left(  z\right)
\right\vert =\left\vert F\left(  z+\gamma\right)  \right\vert
\]
we deduce that for any natural number $m,$%
\[
\left\vert F\left(  z+m\gamma\right)  \right\vert =%
%TCIMACRO{\dprod \limits_{j=0}^{m-1}}%
%BeginExpansion
{\displaystyle\prod\limits_{j=0}^{m-1}}
%EndExpansion
\left\vert P\left(  z+j\gamma\right)  \right\vert \cdot\left\vert F\left(
z\right)  \right\vert .
\]

Let $\Gamma$ be the group generated by $\gamma$ modulo $%
%TCIMACRO{\U{2124} }%
%BeginExpansion
\mathbb{Z}
%EndExpansion
^{2n}.$ Then $\Gamma$ is a countable subgroup of the $2n$-dimensional torus
$\left[  0,1\right)  ^{n}\times\left[  0,1\right)  ^{n}.$

\subsection{Proof of Proposition \ref{main copy(1)}}
If the set $\Gamma$ is dense in the torus $\left[ 0,1\right) ^{n}\times\left[ 0,1\right) ^{n}$, it implies that the zero set of the function $F$ must also be densely distributed in this unit square as well. Considering that $F$ is a continuous function, it logically follows that $F$ must be identically zero across its domain. This validates Proposition \ref{main copy(1)}.

\subsection{Proof of Proposition \ref{main copy(3)}}

Fix $\lambda\in\mathrm{Zero}\left(  Zf\right)  $ then for any natural number
$m,$%
\[
\left\vert F\left(  \lambda+m\gamma\right)  \right\vert =%
%TCIMACRO{\dprod \limits_{j=0}^{m-1}}%
%BeginExpansion
{\displaystyle\prod\limits_{j=0}^{m-1}}
%EndExpansion
\left\vert P\left(  \lambda+j\gamma\right)  \right\vert \cdot\left\vert
F\left(  \lambda\right)  \right\vert =0.
\]
The calculation above suggests that if $\lambda$ belongs to the zero set of $Zf$ (denoted as $\mathrm{Zero}\left( Zf\right)$), then the term $(\lambda + m\gamma) \mod \mathbb{Z}^{2n}$ is also a member of the zero set of $Zf$. This highlights that the zero set of $Zf$ is invariant under the operation of $\Gamma$. More specifically, it underlines that the zero set of $Zf$ remains unaltered under the action of $\gamma$.

\subsection{Proof of Proposition \ref{main}}

Suppose that $\Gamma$ is an infinite set and let $\mathrm{Zero}\left(
Zf\right)  $ be the zero set of the Zak transform of $f.$ Fix $\lambda
\in\mathrm{Zero}\left(  Zf\right)  $ then for any natural number $m,$%
\[
\left\vert F\left(  \lambda+m\gamma\right)  \right\vert =%
%TCIMACRO{\dprod \limits_{j=0}^{m-1}}%
%BeginExpansion
{\displaystyle\prod\limits_{j=0}^{m-1}}
%EndExpansion
\left\vert P\left(  \lambda+j\gamma\right)  \right\vert \cdot\left\vert
F\left(  \lambda\right)  \right\vert =0.
\]
This means that
\[
\lambda\in\mathrm{Zero}\left(  Zf\right)  \Rightarrow\left(  \lambda
+m\gamma\right)  \text{ modulo }%
%TCIMACRO{\U{2124} }%
%BeginExpansion
\mathbb{Z}
%EndExpansion
^{2n}\in\mathrm{Zero}\left(  Zf\right)
\]
and this shows that $\left\vert \mathrm{Zero}\left(  Zf\right)  \right\vert
=\infty,$ contradicting the assumption that the zero set of the Zak transform of $f$ is finite.

\end{document}